\newcommand\supp{\mathop{\rm supp}}
\theoremstyle{plain} 
\newtheorem{theorem}{\indent\sc Theorem}[section]
\newtheorem{lemma}[theorem]{\indent\sc Lemma}
\newtheorem{proposition}[theorem]{\indent\sc Proposition}
\theoremstyle{definition} 
\newtheorem{definition}[theorem]{\indent\sc Definition}
\newtheorem{remark}[theorem]{\indent\sc Remark}
\title{A note about the discrete Riesz potential on $\mathbb{Z}^n$} 
\author{
%
%
\textsc{Pablo Rocha} 
}
\date{} 
\begin{document}

\maketitle

\footnote{ 
2020 \textit{Mathematics Subject Classification}.
42B30, 42B25.
}
\footnote{ 
\textit{Key words and phrases}:
Discrete Hardy Spaces; Atomic Decomposition; Discrete Riesz Potential
}

\begin{abstract}
In this note we prove that the discrete Riesz potential $I_{\alpha}$ defined on $\mathbb{Z}^n$ is a bounded operator 
$H^p(\mathbb{Z}^n) \to \ell^q(\mathbb{Z}^n)$ for $0 < p \leq 1$ and $\frac{1}{q} = \frac{1}{p} - \frac{\alpha}{n}$, where $0 < \alpha < n$.
\end{abstract}

\section{Introduction}

Given $0 < \alpha < n$, the discrete Riesz potential $I_{\alpha}$ on $\mathbb{Z}^n$ is formally defined by
\begin{equation} \label{Riesz potential}
(I_{\alpha}b)(j) = \sum_{i \in \mathbb{Z}^n \setminus \{ j \}} \frac{b(i)}{|i-j |^{n - \alpha}}, \,\,\,\,\,\, j \in \mathbb{Z}^n.
\end{equation}
The continuous counterpart of (\ref{Riesz potential}) is well known in the literature (see \cite{Weiss}, \cite{Stein}, \cite{Hedberg}, 
\cite{Taible}, \cite{Krantz}, \cite{Nakai}). In the discrete setting, Y. Kanjin and M. Satake in \cite[Theorem 4]{Kanjin} studied the operator given in (\ref{Riesz potential}) for the case $n=1$ and proved, for $0 < p \leq 1$ and $\frac{1}{q} = \frac{1}{p} - \frac{1}{\alpha}$, their $H^{p}(\mathbb{Z}) - H^{q}(\mathbb{Z})$ boundedness by means of the molecular decomposition of $H^p(\mathbb{Z})$. The $H^{p}(\mathbb{Z}) - \ell^{q}(\mathbb{Z})$ boundedness of (\ref{Riesz potential}), with $n=1$, was pointed out by the author in 
\cite[Remark 11]{Rocha}. Discrete operators analogous to (\ref{Riesz potential}) were studied by E. Stein and S. Wainger in \cite{Wainger} and
by D. Oberlin in \cite{Oberlin}.

The theory for Hardy spaces on $\mathbb{Z}^n$ was developed by S. Boza and M. Carro in \cite{Carro} (see also \cite{Boza}). There, the authors
gave a variety of distinct approaches to characterize the discrete Hardy spaces $H^p(\mathbb{Z}^n)$ analogous to the ones given for the Hardy spaces $H^p(\mathbb{R}^n)$. Ones of these characterizations is as follows: we consider the discrete Poisson kernel on $\mathbb{Z}^n$, which is defined by
\[
P_t^d(j) = C_n \frac{t}{(t^2 + |j|^2)^{(n+1)/2}}, \,\,\,\, t > 0, \,\, j \in \mathbb{Z}^n \setminus \{ {\bf 0} \}, \,\, P_t^d({\bf 0}) = 0,
\]
where $C_n$ is a normalized constant depending on the dimension. For $0 < p < \infty$ and a sequence $b = \{ b(i) \}_{i \in \mathbb{Z}^n}$ we say that $b$ belongs to $\ell^{p}(\mathbb{Z}^n)$ if
$$\| b \|_{\ell^p(\mathbb{Z}^n)} :=\left( \sum_{i \in \mathbb{Z}^n} |b(i)|^p \right)^{1/p} < \infty.$$ For $p=\infty$, we say that $b$ belongs to 
$\ell^{\infty}(\mathbb{Z}^n)$ if $$\|b \|_{\ell^\infty(\mathbb{Z}^n)} := \sup_{i \in \mathbb{Z}^n} |b(i)| < \infty.$$
Then, for $0 < p \leq 1$, we define
\[
H^p(\mathbb{Z}^n) = \left\{ b \in \ell^p(\mathbb{Z}^n) : \sup_{t>0} |(P_t^d \ast_{\mathbb{Z}^n} b)| \in \ell^p(\mathbb{Z}^n) \right\},
\]
with the "$p$-norm" given by
\[
\| b \|_{H^p(\mathbb{Z}^n)} := \| b \|_{\ell^p(\mathbb{Z}^n)} + \|\sup_{t>0} |(P_t^d \ast_{\mathbb{Z}^n} b)| \|_{\ell^p(\mathbb{Z}^n)}.
\]
In \cite{Carro}, S. Boza and M. Carro also gave an atomic characterization of $H^p(\mathbb{Z}^n)$ for $0 < p \leq 1$. Before establishing this result we recall the definition of $(p, \infty, d_p)$-atom in $H^p(\mathbb{Z}^n)$. 

\begin{definition} Let $0 < p \leq 1$ and $d_p := \lfloor n(p^{-1} - 1) \rfloor$. We say that a sequence 
$a = \{ a(j) \}_{j \in \mathbb{Z}^n}$ is an $(p, \infty, d_p)$-atom centered at a discrete cube $Q \subset \mathbb{Z}^n$ if the following three conditions hold:

(a1) $\supp a \subset Q$,

(a2) $\| a \|_{\ell^\infty(\mathbb{Z}^n)} \leq (\# Q)^{-1/p}$,

(a3) $\displaystyle{\sum_{j \in Q}} j^{\beta} a(j) = 0$ for every multi-index $\beta=(\beta_1, ..., \beta_n) \in \mathbb{N}_0^n$ with 
$\beta_1 + \cdot \cdot \cdot + \beta_n \leq d_p$.
\end{definition}

The atomic decomposition mentioned for $H^p(\mathbb{Z}^n)$ is established in the following theorem.

\begin{theorem} (\cite[Theorem 3.7]{Carro}) \label{atomic Hp} Let $0 < p \leq 1$, $d_p = \lfloor n (p^{-1} - 1) \rfloor$ and 
$b \in H^{p}(\mathbb{Z}^n)$. Then there exist a sequence of $(p, \infty, d_p)$-atoms $\{ a_k \}_{k=0}^{+\infty}$, a sequence of scalars 
$\{ \lambda_k \}_{k=0}^{+\infty}$ and a positive constant $C$, which depends only on $p$ and $n$, with 
$\sum_{k=0}^{+\infty} |\lambda_k |^{p} \leq C \| b \|_{H^{p}(\mathbb{Z}^n)}^{p}$ such that $b = \sum_{k=0}^{+\infty} \lambda_k a_k$, where the series converges in $H^{p}(\mathbb{Z}^n)$.
\end{theorem}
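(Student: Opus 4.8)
The plan is to run a Calderón--Zygmund decomposition of $b$ over the dyadic level sets of a maximal function, exactly as in the classical proofs of the atomic decomposition of $H^p(\mathbb{R}^n)$ (Fefferman--Stein, Latter, Stein's book, Ch.~III), transplanting each step to the lattice $\mathbb{Z}^n$ and isolating in advance the two places where the discrete structure genuinely intervenes. The first step is to pass from the Poisson maximal function $M_P b(j) = \sup_{t>0}|(P_t^d \ast_{\mathbb{Z}^n} b)(j)|$ to an equivalent grand maximal function $\mathcal{M}b$ built from dilates of a fixed class of smooth bumps, evaluated nontangentially. Because $\mathbb{Z}^n$ carries counting measure, whose balls stabilize at unit scale, the usual $t^{-n}$ dilation must be truncated at $t\gtrsim 1$ (equivalently, normalized by $\#(B(x_0,t)\cap\mathbb{Z}^n)$); I would quote from the framework of Boza--Carro that $\|\mathcal{M}b\|_{\ell^p}\approx\|M_P b\|_{\ell^p}\lesssim\|b\|_{H^p(\mathbb{Z}^n)}$. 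Since the summand $\|b\|_{\ell^p}$ is built into the $H^p$ quasi-norm and the normalization at unit scale is by counting measure, the grand maximal function can be arranged to dominate $b$ pointwise, $|b(j)|\le C\,\mathcal{M}b(j)$, which is what makes the good part of the decomposition bounded.

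For each $k\in\mathbb{Z}$ set $\Omega_k=\{j\in\mathbb{Z}^n:\mathcal{M}b(j)>2^k\}$; these sets are nested, and the weak-type estimate $\#\Omega_k\le 2^{-kp}\|\mathcal{M}b\|_{\ell^p}^p$ is immediate from Chebyshev. I would then perform a discrete Whitney decomposition $\Omega_k=\bigcup_i Q_{k,i}$ into disjoint lattice cubes, with bounded overlap of their fixed dilates and with $\mathrm{side}(Q_{k,i})\approx\mathrm{dist}(Q_{k,i},\mathbb{Z}^n\setminus\Omega_k)$, and build a partition of unity $\{\eta_{k,i}\}$ (restrictions to $\mathbb{Z}^n$ of smooth bumps) subordinate to these dilates. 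Defining bad pieces $b_{k,i}=(b-c_{k,i})\eta_{k,i}$, where $c_{k,i}$ is the projection of $b$ onto polynomials of degree $\le d_p$ in the $\eta_{k,i}$-weighted inner product on $Q_{k,i}\cap\mathbb{Z}^n$, chosen so that $b_{k,i}$ has vanishing moments through order $d_p$, and telescoping across consecutive levels in the usual way, produces $b=\sum_{k,i}\lambda_{k,i}a_{k,i}$ with $\lambda_{k,i}\approx 2^k(\#\widetilde{Q}_{k,i})^{1/p}$ and each $a_{k,i}$ supported on a dilate $\widetilde{Q}_{k,i}$, satisfying (a1)--(a3) by construction. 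Summing over the bounded-overlap cubes then gives $\sum_{k,i}|\lambda_{k,i}|^p\lesssim\sum_k 2^{kp}\#\Omega_k\lesssim\|\mathcal{M}b\|_{\ell^p}^p\lesssim\|b\|_{H^p(\mathbb{Z}^n)}^p$.

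The two lattice-specific difficulties, and where I expect the real work to lie, are the following. To impose (a3) I must project $b$ onto polynomials using counting measure on $Q_{k,i}\cap\mathbb{Z}^n$ rather than Lebesgue measure, and this is harmless only when the discrete cube carries enough lattice points to make the monomials of degree $\le d_p$ unisolvent, with the $\ell^\infty\to\ell^\infty$ norm of the projection bounded uniformly in the location and side length of the cube. Since the smallest lattice cubes are single points, on which even the mean-zero condition ($\beta=0$, forced because $d_p=\lfloor n(p^{-1}-1)\rfloor\ge 0$) cannot hold for a nonzero sequence, one must rule out too-small Whitney cubes; the mechanism should be a regularity (``thickness at unit scale'') property of $\mathcal{M}b$ --- if $\mathcal{M}b(j)>2^k$ then $\mathcal{M}b\gtrsim 2^k$ on a whole unit neighbourhood of $j$ --- forcing $\mathrm{side}(Q_{k,i})\gtrsim 1$ and hence each cube to contain $\gtrsim(d_p+1)^n$ points. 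This is the lattice analogue of the fact, already visible from $M_P(\delta_{\mathbf 0})(j)\approx|j|^{-n}\notin\ell^p$ for $p\le 1$, that $H^p(\mathbb{Z}^n)$ admits no fine-scale concentration and hence no single-point atoms. Establishing this regularity, together with the uniform bound for the discrete polynomial projection, is the main obstacle; the bound $|g|\lesssim 2^k$ for the good part then follows from $|b(j)|\le C\,\mathcal{M}b(j)$ and the control of discrete averages by $\mathcal{M}b$.

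Finally, to upgrade convergence of $b=\sum_{k,i}\lambda_{k,i}a_{k,i}$ from the pointwise (and $\ell^p$) sense to the $H^p(\mathbb{Z}^n)$ quasi-norm, I would use that each $(p,\infty,d_p)$-atom satisfies $\|a_{k,i}\|_{H^p(\mathbb{Z}^n)}\le C$ uniformly --- itself a routine consequence of the size, support, and moment conditions via the estimate on $\sup_{t>0}|P_t^d\ast_{\mathbb{Z}^n}a_{k,i}|$ --- together with the $p$-subadditivity of $\|\cdot\|_{H^p}^p$. Thus the tails $\sum_{(k,i)\notin F}|\lambda_{k,i}|^p\,\|a_{k,i}\|_{H^p}^p\to 0$ as $F$ exhausts the index set, giving convergence in $H^p(\mathbb{Z}^n)$ and completing the proof.
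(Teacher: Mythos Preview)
The paper does not supply a proof of this statement: Theorem~\ref{atomic Hp} is quoted from Boza and Carro \cite[Theorem~3.7]{Carro} and used as a black box in the proof of the main result (Theorem~\ref{main result}). There is therefore no in-paper argument to compare your proposal against.

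That said, your outline is the standard route and is essentially the one Boza and Carro take in \cite{Carro}: pass to a grand maximal characterization, run a Calder\'on--Zygmund/Whitney decomposition on the level sets $\{\mathcal{M}b>2^k\}$, subtract polynomial projections of degree $\le d_p$ to enforce the moment condition, and telescope. You have correctly isolated the two genuinely discrete issues --- the need for Whitney cubes of side $\gtrsim 1$ so that the polynomial projection is well-posed with uniformly bounded norm, and the unit-scale regularity of $\mathcal{M}b$ that guarantees this --- and your mechanism for $H^p$-convergence via the uniform $H^p$-bound on atoms is the right one. As a sketch this is sound; the details (discrete Whitney covering with controlled overlap, uniform invertibility of the discrete Gram matrix for monomials up to degree $d_p$ on cubes with enough lattice points, and the precise form of the unit-scale thickening of $\Omega_k$) are exactly where Boza and Carro spend their effort, so if you want a self-contained write-up you should expect those to require care rather than be routine.
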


The main result of this note is contained in the following theorem, which will be proved in Section 3 via the atomic decomposition of 
$H^p(\mathbb{Z}^n)$ joint with some auxiliary results of Section 2.

{\sc Theorem} \ref{main result}.
{\it Let $0 < \alpha < n$ and let $I_{\alpha}$ be the discrete Riesz potential given by (\ref{Riesz potential}). Then, for
$0 < p \leq 1$ and $\frac{1}{q} = \frac{1}{p} - \frac{\alpha}{n}$
\[
\| I_{\alpha} \, b \|_{\ell^{q}(\mathbb{Z}^n)} \leq C \| b \|_{H^{p}(\mathbb{Z}^n)},
\]
where $C$ does not depend on $b$.}

\

{\bf Notation.} Throughout this paper, $C$ will denote a positive real constant not necessarily the same at each occurrence. We set 
$\mathbb{N}_0 = \mathbb{N} \cup \{0\}$. For every $A \subset \mathbb{Z}^n$, we denote by $\#A$ and $\chi_{A}$ the cardinality of the set $A$ and the characteristic sequence of $A$ on $\mathbb{Z}^n$ respectively. Given a real number $s \geq 0$, we write $\lfloor s \rfloor$ for the integer part of $s$. 

\section{Preliminaries}

We start recalling some basic facts about multiple series.  A multiple series is of the form
\begin{equation}
\displaystyle{\sum_{k \in \mathbb{Z}^{n}}} b(k), \label{multi-series}
\end{equation}
where $b(k) \in \mathbb{C}$ for each $k \in \mathbb{Z}^{n}$. There are many different ways to define the sum of a multiple series by means of partial sums.  In the literature the following two are the most common (see e.g. \cite{Alimov}, \cite{Weisz}): 

The \textit{Nth-quadratic partial sum} of the series in (\ref{multi-series}) is defined by
\[
S_N = \sum_{|k|_{\infty} \leq N} b(k),
\]
where $k =(k_1, ..., k_n) \in \mathbb{Z}^{n}$ and $|k|_{\infty} = \max \{ |k_i| : i=1, ..., n \}$. If $\displaystyle{\lim_{N \rightarrow \infty}} S_N$ exists we say that the series in (\ref{multi-series}) is \textit{quadratically convergent}.

\begin{remark} \label{bound series}
Given a nonnegative sequence $\{ b(k) \}_{k \in \mathbb{Z}^n}$, we have that
\[
\sum_{|k|_{\infty} \leq N} b(k) \leq \sum_{|k_n| \leq N} \cdot \cdot \cdot \sum_{|k_1| \leq N} b(k_1, ..., k_n), \,\,\,\,\,\, \forall \,
N \geq 1. 
\] 
\end{remark} 

The \textit{Nth-circular partial sum} of the series in (\ref{multi-series}) is defined by
\[
\widetilde{S}_N = \sum_{|k| \leq N} b(k),
\]
where $k =(k_1, ..., k_n) \in \mathbb{Z}^{n}$ and $|k| = (k_1^{2} + \cdot \cdot \cdot + k_n^{2})^{1/2}$. The series in (\ref{multi-series}) is \textit{circularly convergent} if $\displaystyle{\lim_{N \rightarrow \infty}} \widetilde{S}_N$ exists.

In general, the circular convergence and the quadratic convergence are not equivalent (see \cite{Alimov}, p. 7-8). However, if a series is absolutely convergent in the sense circular or quadratic, then both convergence are equivalent and their sums coincide. Since our results only involve absolutely convergent series we can use one or another definition as it suits.

The following result will be useful in the study of the discrete Riesz potential.

\begin{lemma}\label{series} If $\epsilon > 0$, then the multiple series
\begin{equation}
\sum_{k \in \mathbb{Z}^{n} \setminus \{ \bf{0} \}} \frac{1}{|k|^{n+\epsilon}} \label{serie 0}
\end{equation}
converges.
\end{lemma}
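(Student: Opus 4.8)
The statement is the elementary fact that $\sum_{k\in\mathbb{Z}^n\setminus\{\mathbf 0\}}|k|^{-(n+\epsilon)}$ converges for every $\epsilon>0$, and the natural route is to dominate the sum by a convergent integral (or, staying purely in the discrete world, by an iterated geometric-type comparison). The plan is to group the lattice points according to the dyadic size of $|k|_\infty$, count how many points lie in each shell, and observe that the resulting numerical series converges because $n+\epsilon>n$.

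First I would switch from the Euclidean norm $|k|$ to the sup-norm $|k|_\infty$, using the equivalence $|k|_\infty\le|k|\le\sqrt n\,|k|_\infty$, so that $|k|^{-(n+\epsilon)}\le |k|_\infty^{-(n+\epsilon)}$ on $\mathbb{Z}^n\setminus\{\mathbf 0\}$. Next I would partition $\mathbb{Z}^n\setminus\{\mathbf 0\}$ into the annuli $A_m=\{k\in\mathbb{Z}^n: m\le |k|_\infty<m+1\}=\{k:|k|_\infty=m\}$ for $m\ge 1$. A direct count gives $\#A_m=(2m+1)^n-(2m-1)^n$, and since this is a polynomial in $m$ of degree $n-1$, there is a constant $C=C(n)$ with $\#A_m\le C m^{n-1}$ for all $m\ge 1$. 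On $A_m$ we have $|k|_\infty^{-(n+\epsilon)}=m^{-(n+\epsilon)}$, hence
\[
\sum_{k\in\mathbb{Z}^n\setminus\{\mathbf 0\}}\frac{1}{|k|^{n+\epsilon}}\ \le\ \sum_{m=1}^{\infty}\frac{\#A_m}{m^{n+\epsilon}}\ \le\ C\sum_{m=1}^{\infty}\frac{1}{m^{1+\epsilon}}\ <\ \infty,
\]
the last series being a convergent $p$-series since $1+\epsilon>1$. By Remark \ref{bound series} (or simply because all terms are nonnegative) the quadratic partial sums are bounded, so the multiple series converges absolutely, and therefore converges in both the quadratic and the circular sense to the same value.

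There is no real obstacle here; the only point requiring a line of justification is the estimate $\#A_m\le Cm^{n-1}$, which follows from expanding $(2m+1)^n-(2m-1)^n$ by the binomial theorem and noting that the $m^n$ terms cancel, leaving a polynomial of degree $n-1$ with positive leading coefficient. Everything else is the comparison with a convergent $p$-series. I would present the argument in this order: norm comparison, annular decomposition, cardinality bound, reduction to $\sum m^{-(1+\epsilon)}$, and a closing remark that absolute convergence makes the mode of summation irrelevant.
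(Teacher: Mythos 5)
Your proof is correct, but it takes a genuinely different route from the paper's. You use the classical annular decomposition: pass to the sup-norm, partition $\mathbb{Z}^n\setminus\{\mathbf 0\}$ into the shells $\{|k|_\infty=m\}$, bound the shell cardinality $(2m+1)^n-(2m-1)^n$ by $Cm^{n-1}$ via the binomial theorem, and compare with the $p$-series $\sum_m m^{-(1+\epsilon)}$. The paper instead never counts lattice points in shells: it bounds the quadratic partial sum by an iterated sum (Remark \ref{bound series}), then uses $|k_1|+\cdots+|k_n|\le n|k|$ together with the multinomial inequality \eqref{multinomial ineq}, $(|k_1|+\cdots+|k_n|)^{n+\epsilon}\ge\prod_{l}\max\{1,|k_l|^{1+\epsilon/n}\}$, to \emph{factor} the $n$-fold sum into the $n$-th power of the one-dimensional series $1+\sum_{k\ge1}k^{-(1+\epsilon/n)}$. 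Both arguments are complete and elementary; yours is the more standard and arguably more transparent one, while the paper's factorization is stylistically aligned with the iterated-sum machinery (Remark \ref{bound series}, Proposition \ref{op J}) that the author reuses later in the proof of Theorem \ref{Riesz bound}. Your closing observation that absolute convergence makes the summation mode (quadratic versus circular) irrelevant matches the discussion the paper gives just before the lemma, so nothing is missing.
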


\begin{proof} Let $S_N$ be the $N$th-quadratic partial sum of the series in (\ref{serie 0}). Then, by Remark \ref{bound series}, we obtain
\[
S_{N} = \sum_{0 < | k |_{\infty} \leq N} \frac{1}{|k|^{n+\epsilon}} \leq 2^{n} \sum_{k_n =0}^{N} \cdot \cdot \cdot \sum_{k_2 =0}^{N} \sum_{k_1 =1}^{N} \frac{1}{(k_1^{2} + k_2^2 + \cdot \cdot \cdot + k_n^{2})^{\frac{n + \epsilon}{2}}}.
\]
On the other hand, it is clear that
\begin{equation} \label{estimate norm}
|k_1| + |k_2| + \cdot \cdot \cdot + |k_n| \leq n  (k_1^{2} + k_2^2 + \cdot \cdot \cdot + k_n^{2})^{1/2},
\end{equation}
by Multinomial Theorem, for every $k \in \mathbb{Z}^{n} \setminus \{ \bf{0} \}$, we have that
\begin{equation} \label{multinomial ineq}
(|k_1| + |k_2| + \cdot \cdot \cdot + |k_n|)^{n + \epsilon} \geq \max \{1,|k_1|^{1+ \frac{\epsilon}{n}}\} \cdot \max \{1,|k_2|^{1+ \frac{\epsilon}{n}}\} \cdot \cdot \cdot \max \{ 1, |k_n|^{1+ \frac{\epsilon}{n}} \}.
\end{equation}
So,
\[
\sum_{0 < | k |_{\infty} \leq N} \frac{1}{|k|^{n+\epsilon}} \leq 2^{n} \sum_{k_n =0}^{N} \cdot \cdot \cdot \sum_{k_2 =0}^{N} 
\sum_{k_1 =1}^{N} \frac{n^{n + \epsilon}}{ \max \{1,|k_1|^{1+ \frac{\epsilon}{n}}\} \cdot \max \{1,|k_2|^{1+ \frac{\epsilon}{n}}\} 
\cdot \cdot \cdot \max \{ 1, |k_n|^{1+ \frac{\epsilon}{n}} \}}
\]
\[
\leq 2^{n}n^{n + \epsilon} \left( 1 + \sum_{k_1=1}^{N} \frac{1}{k_1^{1 + \frac{\epsilon}{n}}} \right)^{n}, \,\,\, \forall \, N \geq 2.
\]
Finally, letting $N$ tend to infinity, we obtain
\[
\sum_{k \in \mathbb{Z}^{n} \setminus \{ \bf{0} \}} \frac{1}{|k|^{n+\epsilon}} := \lim_{N \rightarrow \infty} S_{N} \leq 2^{n}n^{n+\epsilon} \left( 1 + \sum_{k_1=1}^{\infty} \frac{1}{k_1^{1 + \frac{\epsilon}{n}}} \right)^{n} < \infty.
\]
\end{proof}  

A discrete cube $Q$ centered at $j =(j_1, ..., j_n)  \in \mathbb{Z}^n$ is of the form $Q = \prod_{1 \leq l \leq n} [j_l -m, j_l +m]$, where
for each $l=1, ..., n$, $[j_l -m, j_l +m] = \{ j_l - m, ..., j_l, ..., j_l + m \}$ with $m \in \mathbb{N}_0$. It is clear that 
$\# Q = (2m+1)^n$.

Let $0 \leq \alpha < n$, given a sequence $b = \{ b(i) \}_{i \in \mathbb{Z}^n}$ we define the centered fractional maximal sequence 
$M_{\alpha} b$ by
\[
(M_{\alpha} b)(j) = \sup_{Q \ni j} \frac{1}{\# Q^{1 - \frac{\alpha}{n}}} \sum_{i \in Q} |b(i)|, \,\,\,\,\,\, j \in \mathbb{Z}^n,
\]
where the supremum is taken over all discrete cubes $Q$ centered at $j$. We observe that if $\alpha = 0$, then $M_0 = M$ where $M$ is the centered discrete maximal operator.

The following result is a consequence of the harmonic analysis on spaces of homogeneous type applied to the space 
$(\mathbb{Z}^n, \mu, | \cdot |)$ where $\mu$ is the counting measure and $| \cdot |$ is the usual distance in $\mathbb{Z}^n$ 
(see \cite{St} or \cite{Deng}). We omit its proof.

\

\begin{theorem}\label{maximal} Let $b = \{ b(i) \}_{i \in \mathbb{Z}^n}$ be a sequence.
\begin{enumerate}
\item[(a)] If $b \in \ell^{1}(\mathbb{Z}^n)$, then for every $\alpha > 0$
\[
\#\{ j \in \mathbb{Z}^n : (Mb)(j) > \alpha \} \leq \frac{C}{\alpha} \| b \|_{\ell^{1}(\mathbb{Z}^n)},
\]
where $C$ is a positive constant which does not depend on $\alpha$ and $b$.

\item[(b)] If $b \in \ell^{p}(\mathbb{Z}^n)$, $1 < p \leq \infty$, then $Mb \in \ell^{p}(\mathbb{Z}^n)$ and
\[
\| Mb \|_{\ell^{p}(\mathbb{Z}^n)} \leq C \| b \|_{\ell^{p}(\mathbb{Z}^n)},
\]
where $C$ depends only on $p$ and $n$. 
\end{enumerate}
\end{theorem}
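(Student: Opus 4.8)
The plan is to exploit the fact that $(\mathbb{Z}^n, |\cdot|, \mu)$, with $\mu$ the counting measure, is a space of homogeneous type, so that the classical Hardy--Littlewood--Wiener machinery applies. The first step is to record the doubling property in the form needed here. Since the Euclidean distance and the sup-distance $|\cdot|_{\infty}$ are comparable on $\mathbb{Z}^n$, the supremum defining $Mb$ may be taken equivalently over the centered cubes $Q$ introduced above; a centered cube of radius $m$ satisfies $\#Q = (2m+1)^n$, and enlarging the radius from $m$ to $2m$ multiplies the cardinality by at most $2^n$. Hence $\mu$ is doubling, and more generally $\#(\lambda Q) \leq C_{n,\lambda}\, \#Q$ for every fixed dilation factor $\lambda \geq 1$.

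For part (a) I would run a Vitali covering argument. Fix $b \in \ell^1(\mathbb{Z}^n)$ and $\alpha > 0$, and set $E_\alpha = \{ j \in \mathbb{Z}^n : (Mb)(j) > \alpha \}$. For each $j \in E_\alpha$ choose a centered cube $Q_j$ with $\sum_{i \in Q_j} |b(i)| > \alpha\, \#Q_j$. The $\ell^1$ hypothesis bounds the size of these cubes, since $\alpha (2 m_j + 1)^n < \| b \|_{\ell^1(\mathbb{Z}^n)}$ forces the radius $m_j$ to lie below an explicit threshold; one checks moreover that this makes $E_\alpha$ finite. With uniformly bounded radii the finite Vitali selection lemma yields a pairwise disjoint subfamily $\{ Q_{j_k} \}$ with $E_\alpha \subset \bigcup_k 3 Q_{j_k}$. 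Combining the covering, the doubling estimate $\#(3 Q_{j_k}) \leq C\, \#Q_{j_k}$, the defining inequality for each selected cube, and the disjointness of the $Q_{j_k}$ gives
\[
\# E_\alpha \leq \sum_k \#(3 Q_{j_k}) \leq C \sum_k \#Q_{j_k} \leq \frac{C}{\alpha} \sum_k \sum_{i \in Q_{j_k}} |b(i)| \leq \frac{C}{\alpha}\, \| b \|_{\ell^1(\mathbb{Z}^n)},
\]
which is the asserted weak-type $(1,1)$ bound.

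Part (b) then follows by interpolation. The endpoint $p = \infty$ is immediate: for every centered cube $Q \ni j$ one has $\frac{1}{\# Q}\sum_{i \in Q} |b(i)| \leq \| b \|_{\ell^\infty(\mathbb{Z}^n)}$, whence $\| Mb \|_{\ell^\infty(\mathbb{Z}^n)} \leq \| b \|_{\ell^\infty(\mathbb{Z}^n)}$. Since $M$ is sublinear, the Marcinkiewicz interpolation theorem applied between the weak-type $(1,1)$ estimate of part (a) and the strong-type $(\infty,\infty)$ estimate yields $\| Mb \|_{\ell^p(\mathbb{Z}^n)} \leq C \| b \|_{\ell^p(\mathbb{Z}^n)}$ for every $1 < p < \infty$, with $C$ depending only on $p$ and $n$.

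The main obstacle I anticipate is the covering step in part (a): one must verify that the Vitali lemma is legitimately available in this discrete, non-compact setting, where a priori the competing cubes could have arbitrarily large radii. The crucial point is that the $\ell^1$ control forces those radii to be uniformly bounded and the level set $E_\alpha$ to be finite, which is exactly what licenses the finite greedy selection and the subsequent disjointness bookkeeping. Everything else---the doubling count and the $\ell^\infty$ endpoint---is elementary, so it is only this covering/finiteness reduction that requires genuine care.
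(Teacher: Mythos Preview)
Your proposal is correct and follows precisely the route the paper indicates: the paper does not give a proof at all, but simply observes that $(\mathbb{Z}^n,\mu,|\cdot|)$ is a space of homogeneous type and cites the standard references for the Hardy--Littlewood theory in that setting. You have unpacked exactly that argument---doubling, Vitali selection for the weak $(1,1)$ bound, and Marcinkiewicz interpolation against the trivial $\ell^\infty$ endpoint---so there is nothing to add.
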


Next, we consider $0 < \alpha < n$, $1 < p < \frac{n}{\alpha}$ and $q$ defined by $\frac{1}{q} = \frac{1}{p} - \frac{\alpha}{n}$. Let $Q$ be a discrete cube centered at $j \in \mathbb{Z}^n$. By taking into account that $\frac{p}{q} + \frac{\alpha p}{n} = 1$, to apply the H\"older inequality with $\frac{n-\alpha}{n} + \frac{\alpha}{n} = 1$, we have
\begin{eqnarray*}
\frac{1}{\# Q^{1 - \frac{\alpha}{n}}} \sum_{i \in Q} |b(i)| &=& \frac{1}{\# Q^{1 - \frac{\alpha}{n}}} \sum_{i \in Q} |b(i)|^{\frac{p}{q}}
|b(i)|^{\frac{\alpha p}{n}} \\
&\leq& \left( \frac{1}{\# Q} \sum_{i \in Q} |b(i)|^{\frac{p}{q}(\frac{n}{n-\alpha})} \right)^{\frac{n-\alpha}{n}}
\left( \sum_{i \in \mathbb{Z}^n} |b(i)|^{p} \right)^{\frac{\alpha}{n}},
\end{eqnarray*}
to take the supremum over all cubes $Q$ centered at $j$ we obtain
\begin{equation} \label{fract max}
(M_{\alpha} b)(j) \leq \left[  M \left(|b|^{\frac{p}{q}(\frac{n}{n-\alpha})} \right)(j) \right]^{\frac{n-\alpha}{n}} 
\left( \sum_{i \in \mathbb{Z}^n} |b(i)|^p \right)^{\frac{\alpha}{n}}, \,\,\,\,\,\, j \in \mathbb{Z}^n. 
\end{equation}
Now, the pointwise estimate in (\ref{fract max}) and Theorem \ref{maximal} lead to the following result.

\begin{proposition} \label{fract max 2}
Let $0 < \alpha < n$. If $1 < p < \frac{n}{\alpha}$ and $\frac{1}{q} = \frac{1}{p} - \frac{\alpha}{n}$, then
\[
\| M_{\alpha} b \|_{\ell^q(\mathbb{Z}^n)} \leq C \| b \|_{\ell^p(\mathbb{Z}^n)}, \,\,\,\, \forall \,\, b \in \ell^p(\mathbb{Z}^n).
\]
\end{proposition}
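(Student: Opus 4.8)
The plan is to read the result off directly from the pointwise inequality (\ref{fract max}) together with the strong-type maximal bound Theorem \ref{maximal}(b). Write $s := \frac{p}{q}\cdot\frac{n}{n-\alpha}$ and $g := |b|^{s}$, so that (\ref{fract max}) reads
\[
(M_\alpha b)(j) \le \big[(Mg)(j)\big]^{\frac{n-\alpha}{n}}\,\|b\|_{\ell^p(\mathbb{Z}^n)}^{\frac{p\alpha}{n}}, \qquad j \in \mathbb{Z}^n,
\]
where the last factor is a finite constant independent of $j$ since $b \in \ell^p(\mathbb{Z}^n)$.

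First I would record the two elementary exponent identities forced by $\frac1q = \frac1p - \frac\alpha n$ (note that $1<p<\frac n\alpha$ makes $q = \frac{np}{n-\alpha p}$ finite and positive): namely $\frac{p}{q} + \frac{p\alpha}{n} = 1$, and, setting $r := q\cdot\frac{n-\alpha}{n}$, the identities $sr = p$ and $r = \frac{p}{s}$. A short computation gives $r = \frac{p(n-\alpha)}{n-\alpha p}$, and since $p(n-\alpha) - (n-\alpha p) = n(p-1) > 0$ we get $r > 1$. This inequality $r>1$ is the one point that genuinely has to be checked, since it is exactly what licenses the use of the $\ell^r$-boundedness in Theorem \ref{maximal}(b) rather than merely the weak-type bound of Theorem \ref{maximal}(a); everything else is bookkeeping of exponents.

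Next I would raise the pointwise inequality to the $q$-th power and sum over $j \in \mathbb{Z}^n$, using $q\cdot\frac{n-\alpha}{n} = r$:
\[
\|M_\alpha b\|_{\ell^q(\mathbb{Z}^n)}^q \le \|b\|_{\ell^p(\mathbb{Z}^n)}^{\frac{p\alpha q}{n}} \sum_{j \in \mathbb{Z}^n} \big[(Mg)(j)\big]^{r} = \|b\|_{\ell^p(\mathbb{Z}^n)}^{\frac{p\alpha q}{n}}\, \|Mg\|_{\ell^r(\mathbb{Z}^n)}^{r}.
\]
Since $sr = p$ we have $\|g\|_{\ell^r(\mathbb{Z}^n)}^r = \sum_j |b(j)|^{sr} = \|b\|_{\ell^p(\mathbb{Z}^n)}^p < \infty$, so $g \in \ell^r(\mathbb{Z}^n)$ with $r>1$, and Theorem \ref{maximal}(b) yields $\|Mg\|_{\ell^r(\mathbb{Z}^n)} \le C\|g\|_{\ell^r(\mathbb{Z}^n)}$. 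Substituting, $\|M_\alpha b\|_{\ell^q(\mathbb{Z}^n)}^q \le C\,\|b\|_{\ell^p(\mathbb{Z}^n)}^{\frac{p\alpha q}{n} + p}$, and the exponent equals $q$ because $\frac{p\alpha}{n} + \frac{p}{q} = 1$. Taking $q$-th roots completes the argument. As indicated, no real obstacle arises beyond verifying $r>1$ and tracking the exponents correctly.
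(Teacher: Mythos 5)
Your proof is correct and is exactly the argument the paper intends: it takes the pointwise estimate (\ref{fract max}) and Theorem \ref{maximal}(b) and carries out the exponent bookkeeping (including the one substantive check that $r=q\frac{n-\alpha}{n}>1$, which reduces to $p>1$) that the paper leaves implicit when it says these two facts "lead to" the proposition.
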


We conclude these preliminaries with the following supporting result on $\mathbb{Z}$.

\begin{proposition} \label{op J}
Given $0 < \gamma <1$ and a sequence $b = \{ b(i) \}_{i \in \mathbb{Z}}$, let $J_{\gamma}$ be the operator defined by
\[
(J_{\gamma}b)(j) = \sum_{i \in \mathbb{Z}} \frac{b(i)}{\max \{ 1, |i-j|^{1 - \gamma} \}}, \,\,\,\,\, j \in \mathbb{Z}.
\]
Then, for $1 < p < \gamma^{-1}$ and $\frac{1}{q} = \frac{1}{p} - \gamma$
\[
\| J_{\gamma} b \|_{\ell^q(\mathbb{Z})} \leq C \| b \|_{\ell^p(\mathbb{Z})}, \,\,\,\, \forall \, b \in \ell^p(\mathbb{Z}).
\]
\end{proposition}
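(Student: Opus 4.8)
The plan is to run the discrete analogue of the classical Hardy--Littlewood--Sobolev argument: first a Hedberg-type pointwise bound of $J_{\gamma}b$ by a power of the discrete maximal function $Mb$, and then Theorem~\ref{maximal}(b). Since $|(J_{\gamma}b)(j)| \le (J_{\gamma}|b|)(j)$ and both sides of the claimed inequality are positively homogeneous in $b$, I may assume $b \ge 0$; and if $b \equiv 0$ there is nothing to prove, so $(Mb)(j) > 0$ for every $j$. Fix $j \in \mathbb{Z}$ and, for an integer $R \ge 1$ to be chosen at the end, decompose
\[
(J_{\gamma}b)(j) = b(j) + \sum_{0 < |i-j| \le R} \frac{b(i)}{\max\{1,|i-j|^{1-\gamma}\}} + \sum_{|i-j| > R} \frac{b(i)}{|i-j|^{1-\gamma}} =: b(j) + L_R(j) + T_R(j).
\]

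For the local part $L_R(j)$ I would group the indices into the dyadic annuli $\{\,2^{k-1} \le |i-j| < 2^{k}\,\}$, $1 \le k \le K$, with $2^{K-1} \le R < 2^{K}$; each annulus lies in a discrete interval centered at $j$ of cardinality at most $C\,2^{k}$, on which $\max\{1,|i-j|^{1-\gamma}\} \ge 2^{(k-1)(1-\gamma)}$, so the sum of $b$ over the annulus is at most $C\,2^{k}(Mb)(j)$ and the annulus contributes at most $C\,2^{k\gamma}(Mb)(j)$. Summing this geometric series in $k$ and using $b(j) \le (Mb)(j)$ gives $b(j) + L_R(j) \le C\,R^{\gamma}(Mb)(j)$. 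For the tail $T_R(j)$ I would apply H\"older's inequality with exponents $p$ and $p' = p/(p-1)$; the hypothesis $p < \gamma^{-1}$ is exactly equivalent to $(1-\gamma)p' > 1$, hence $\sum_{m>R} m^{-(1-\gamma)p'}$ converges and is $\le C\,R^{1-(1-\gamma)p'}$, and taking $p'$-th roots yields $T_R(j) \le C\,\|b\|_{\ell^{p}(\mathbb{Z})}\,R^{\gamma - 1/p}$ (here one also sees $J_{\gamma}b$ is well defined on $\ell^p$). Altogether
\[
(J_{\gamma}b)(j) \le C\Big( R^{\gamma}(Mb)(j) + R^{\gamma - 1/p}\,\|b\|_{\ell^{p}(\mathbb{Z})} \Big).
\]

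It remains to optimize in $R$. Since $(Mb)(j) \le \|b\|_{\ell^{p}(\mathbb{Z})}$ (H\"older on a single cube), the quantity $\big(\|b\|_{\ell^{p}(\mathbb{Z})}/(Mb)(j)\big)^{p}$ is $\ge 1$, so choosing $R$ to be the least integer above it is admissible and balances the two terms, giving the pointwise estimate
\[
(J_{\gamma}b)(j) \le C\,\|b\|_{\ell^{p}(\mathbb{Z})}^{\gamma p}\,\big((Mb)(j)\big)^{1 - \gamma p} = C\,\|b\|_{\ell^{p}(\mathbb{Z})}^{\gamma p}\,\big((Mb)(j)\big)^{p/q},
\]
where $1 - \gamma p = p/q$. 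Raising to the $q$-th power, summing over $j \in \mathbb{Z}$, and applying Theorem~\ref{maximal}(b) (valid since $1 < p < \gamma^{-1} < \infty$) then gives
\[
\|J_{\gamma}b\|_{\ell^{q}(\mathbb{Z})}^{q} \le C\,\|b\|_{\ell^{p}(\mathbb{Z})}^{\gamma p q}\,\|Mb\|_{\ell^{p}(\mathbb{Z})}^{p} \le C\,\|b\|_{\ell^{p}(\mathbb{Z})}^{\gamma p q + p} = C\,\|b\|_{\ell^{p}(\mathbb{Z})}^{q},
\]
the last identity being precisely $\frac{1}{q} = \frac{1}{p} - \gamma$; taking $q$-th roots finishes the proof, with $C$ independent of $b$.

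The only genuinely delicate point is the convergence of the tail series defining $T_R(j)$, and it is exactly there that the upper restriction $p < \gamma^{-1}$ enters; the lower restriction $p > 1$ is used only for the $\ell^p$-boundedness of $M$. The rest is the routine dyadic-plus-H\"older bookkeeping, transplanted from $\mathbb{R}^{n}$ to $\mathbb{Z}$. One could instead bound $L_R$ by the fractional maximal sequence $M_{\gamma}$ and invoke Proposition~\ref{fract max 2}, but that produces an extra factor $\log R$ in the local estimate, so routing through $M$ is cleaner.
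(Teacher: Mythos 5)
Your proof is correct, but it takes a genuinely different route from the paper. The paper's proof is essentially a one-line reduction: it writes $(J_{\gamma}b)(j) = b(j) + (I_{\gamma}b)(j)$, handles the diagonal term by the continuous embedding $\ell^{p}(\mathbb{Z}) \subset \ell^{q}(\mathbb{Z})$, and then cites the classical $\ell^{p}(\mathbb{Z}) \to \ell^{q}(\mathbb{Z})$ boundedness of the one-dimensional discrete Riesz potential from Hardy--Littlewood--P\'olya. You instead give a self-contained Hedberg-type argument: dyadic decomposition of the local part controlled by $R^{\gamma}(Mb)(j)$, H\"older on the tail giving $R^{\gamma - 1/p}\|b\|_{\ell^{p}}$, optimization in $R$ (legitimate since $(Mb)(j) \le \|b\|_{\ell^{p}}$ guarantees $R \ge 1$), and finally Theorem~\ref{maximal}(b). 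All the exponent arithmetic checks out ($1-\gamma p = p/q$, $\gamma p q + p = q$, and $(1-\gamma)p' > 1 \Leftrightarrow p < \gamma^{-1}$), and the degenerate case $(Mb)(j)=0$ is correctly dispatched. What your approach buys is independence from the external citation: the only input is the $\ell^{p}$-boundedness of the discrete maximal operator, which the paper already assumes in Theorem~\ref{maximal}, so your argument would make the note more self-contained; it also generalizes immediately to $\mathbb{Z}^{n}$, which would let one bypass the iterated one-dimensional reduction used in Theorem~\ref{Riesz bound}. What the paper's route buys is brevity. Your closing remark about the $\log R$ loss when routing through $M_{\gamma}$ is a sensible observation but not needed.
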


\begin{proof} It is easy to check that $(J_{\gamma}b)(j) = b(j) + (I_{\gamma}b)(j)$ for every $j \in \mathbb{Z}$, where $I_{\gamma}$ is the discrete Riesz potential on $\mathbb{Z}$. Then, the proposition follows from the fact that $\ell^p(\mathbb{Z}) \subset \ell^q(\mathbb{Z})$ embeds continuously and that $I_{\gamma}$ is a bounded operator $\ell^p(\mathbb{Z}) \to \ell^q(\mathbb{Z})$ for $1 < p < \gamma^{-1}$ and 
$\frac{1}{q} = \frac{1}{p} - \gamma$ (see \cite{Hardy}, p. 288).
\end{proof}

\section{The $H^p(\mathbb{Z}^n) - \ell^q(\mathbb{Z}^n)$ boundedness of $I_{\alpha}$}

In this section we establish the $H^p(\mathbb{Z}^n) - \ell^q(\mathbb{Z}^n)$ boundedness of the discrete Riesz potential $I_{\alpha}$ 
on $\mathbb{Z}^n$. For them, we first start studying the $\ell^p(\mathbb{Z}^n) - \ell^q(\mathbb{Z}^n)$ boundedness of $I_{\alpha}$.

\begin{theorem} \label{Riesz bound}
For $0 < \alpha < n$, let $I_{\alpha}$ be the discrete Riesz potential given by (\ref{Riesz potential}) . If $1 < p < \frac{n}{\alpha}$, 
$\frac{1}{q} = \frac{1}{p} - \frac{\alpha}{n}$ and $b \in \ell^p(\mathbb{Z}^n)$, then
\begin{equation} \label{pointwise estim}
| (I_{\alpha} b)(j) | < \infty, \,\,\,\, \forall \,\, j \in \mathbb{Z}^n,
\end{equation}
and
\begin{equation} \label{lplq estim for Riesz}
\| I_{\alpha} b \|_{\ell^q(\mathbb{Z}^n)} \leq C \| b \|_{\ell^p(\mathbb{Z}^n)}.
\end{equation}
\end{theorem}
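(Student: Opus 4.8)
The plan is to reduce the claim to a discrete version of Hedberg's inequality and then combine it with the $\ell^p$-boundedness of the discrete maximal operator from Theorem \ref{maximal}. I would first dispose of (\ref{pointwise estim}): fixing $j \in \mathbb{Z}^n$ and applying H\"older's inequality with exponents $p$ and $p' = p/(p-1)$ gives
\[
|(I_\alpha b)(j)| \leq \|b\|_{\ell^p(\mathbb{Z}^n)} \Bigl( \sum_{i \in \mathbb{Z}^n \setminus \{j\}} |i-j|^{-(n-\alpha)p'} \Bigr)^{1/p'}.
\]
The hypothesis $1 < p < n/\alpha$ is exactly equivalent to $(n-\alpha)p' > n$; writing $(n-\alpha)p' = n + \epsilon$ with $\epsilon > 0$ and substituting $k = i-j$, the last series becomes $\sum_{k \neq {\bf 0}} |k|^{-(n+\epsilon)}$, which is finite by Lemma \ref{series}. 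This proves (\ref{pointwise estim}) and records that $\epsilon = (n-\alpha)p' - n$ and $\delta := \epsilon/p' = n/p - \alpha > 0$.

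For (\ref{lplq estim for Riesz}) I may assume $b \geq 0$ (replace $b$ by $|b|$, since $|I_\alpha b| \leq I_\alpha |b|$) and $b \not\equiv 0$. Fix $j$ and a real number $R \geq 1$, and split $(I_\alpha b)(j)$ into the sum over $0 < |i-j| \leq R$ and the sum over $|i-j| > R$. For the near part I decompose $\{ i : 0 < |i-j| \leq R \}$ into the dyadic shells $\{ 2^{-m-1}R < |i-j| \leq 2^{-m}R \}$, $m \geq 0$, which are empty once $2^{-m}R < 1$; on the $m$-th shell $|i-j|^{-(n-\alpha)} \leq C\, 2^{(n-\alpha)m} R^{-(n-\alpha)}$, while the discrete ball $\{ i : |i-j| \leq 2^{-m}R \}$ sits inside a discrete cube centered at $j$ with at most $C\,(2^{-m}R)^n$ points, so that $\sum_{|i-j| \leq 2^{-m}R} b(i) \leq C\,(2^{-m}R)^n (Mb)(j)$. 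Hence the $m$-th shell contributes at most $C\, 2^{-\alpha m} R^\alpha (Mb)(j)$, and summing the resulting geometric series in $m$ — this is where $\alpha > 0$ is used, and it is what prevents a spurious logarithmic factor — bounds the near part by $C\, R^\alpha (Mb)(j)$. For the far part, H\"older's inequality as above together with the quantitative tail estimate $\sum_{|k| > R} |k|^{-(n+\epsilon)} \leq C\, R^{-\epsilon}$ (obtained by the same dyadic/integral comparison used for Lemma \ref{series}) bounds it by $C\, \|b\|_{\ell^p(\mathbb{Z}^n)}\, R^{-\delta}$. Thus
\[
(I_\alpha b)(j) \leq C\, R^\alpha (Mb)(j) + C\, \|b\|_{\ell^p(\mathbb{Z}^n)}\, R^{-\delta}, \qquad \forall\, R \geq 1.
\]

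Since $0 < (Mb)(j) \leq \|b\|_{\ell^\infty(\mathbb{Z}^n)} \leq \|b\|_{\ell^p(\mathbb{Z}^n)}$, I may choose $R = \bigl( \|b\|_{\ell^p(\mathbb{Z}^n)} / (Mb)(j) \bigr)^{p/n} \geq 1$, which balances the two terms (note $\alpha + \delta = n/p$) and yields the discrete Hedberg inequality
\[
(I_\alpha b)(j) \leq C\, (Mb)(j)^{1 - \frac{\alpha p}{n}} \|b\|_{\ell^p(\mathbb{Z}^n)}^{\frac{\alpha p}{n}} = C\, (Mb)(j)^{p/q} \|b\|_{\ell^p(\mathbb{Z}^n)}^{\frac{\alpha p}{n}},
\]
using $1 - \frac{\alpha p}{n} = \frac{p}{q}$. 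Raising to the $q$-th power, summing over $j \in \mathbb{Z}^n$, and applying Theorem \ref{maximal}(b) (legitimate since $p > 1$) gives
\[
\| I_\alpha b \|_{\ell^q(\mathbb{Z}^n)}^q \leq C\, \|b\|_{\ell^p(\mathbb{Z}^n)}^{\frac{q \alpha p}{n}} \sum_{j \in \mathbb{Z}^n} (Mb)(j)^p \leq C\, \|b\|_{\ell^p(\mathbb{Z}^n)}^{\frac{q\alpha p}{n} + p},
\]
and since $\frac{1}{q}\bigl( \frac{q\alpha p}{n} + p \bigr) = \frac{\alpha p}{n} + \frac{p}{q} = 1$ this is precisely (\ref{lplq estim for Riesz}).

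The step I expect to be most delicate is the near-part estimate: the naive bound through the fractional maximal operator $M_\alpha$ loses a factor $\log R$, so it is essential to estimate each dyadic shell via the ordinary maximal operator $M$ and to exploit $\alpha > 0$ to sum a genuinely geometric series; besides that, the only remaining work is the routine bookkeeping of inclusions between discrete balls and cubes and the harmless constraint $R \geq 1$.
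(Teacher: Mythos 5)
Your argument is correct, but it follows a genuinely different route from the paper. For the pointwise finiteness (\ref{pointwise estim}) you and the paper do the same thing (H\"older plus Lemma \ref{series} with $\epsilon=(n-\alpha)p'-n$). For the norm inequality (\ref{lplq estim for Riesz}), however, the paper exploits the tensor-product structure of $\mathbb{Z}^n$: using the elementary inequalities (\ref{estimate norm}) and (\ref{multinomial ineq}) it dominates the kernel $|i-j|^{\alpha-n}$ by the product $\prod_l \max\{1,|i_l-j_l|^{1-\alpha/n}\}^{-1}$, and then iterates the one-dimensional bound of Proposition \ref{op J} (which rests on the classical $\ell^p\to\ell^q$ result in Hardy--Littlewood--P\'olya) together with Minkowski's inequality, one coordinate at a time. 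You instead prove a discrete Hedberg inequality: splitting at radius $R\geq 1$, estimating the near part by $C R^{\alpha}(Mb)(j)$ via dyadic shells (correctly using $\alpha>0$ to sum the geometric series, and correctly noting the shells with $2^{-m}R<1$ are empty), estimating the far part by $C\|b\|_{\ell^p}R^{-\delta}$ with $\delta=n/p-\alpha$, and optimizing in $R$ — the constraint $R\geq 1$ being legitimately discharged by $(Mb)(j)\leq\|b\|_{\ell^\infty}\leq\|b\|_{\ell^p}$. The exponent bookkeeping ($\alpha+\delta=n/p$, $1-\alpha p/n=p/q$) checks out, and the conclusion then follows from Theorem \ref{maximal}(b), which the paper states without proof, exactly as the paper's route relies on the unproved one-dimensional result it cites. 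What each approach buys: the paper's reduction is short given the cited 1D theorem and avoids any maximal-function machinery beyond what is already set up for the atomic part; yours is the standard continuous-variable proof transplanted to $\mathbb{Z}^n$, is independent of the product structure (so it would survive on more general discrete homogeneous settings), and yields as a by-product the pointwise bound $(I_\alpha b)(j)\leq C\,(Mb)(j)^{p/q}\|b\|_{\ell^p}^{\alpha p/n}$, which is of independent interest. The only step you wave at is the tail bound $\sum_{|k|>R}|k|^{-(n+\epsilon)}\leq CR^{-\epsilon}$; it does not literally follow from the proof of Lemma \ref{series} as written, but the dyadic-shell counting you already use for the near part gives it immediately, so this is a cosmetic rather than a substantive gap.
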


\begin{proof} Let $b \in \ell^p(\mathbb{Z}^n)$ with $1 < p < \frac{n}{\alpha}$, then $p' > \frac{n}{n-\alpha}$ and so 
$(n-\alpha) p' - n >0$. To apply the H\"older inequality and Lemma \ref{series} with $\epsilon := (n-\alpha) p' - n$, we obtain
\[
| (I_{\alpha} b)(j) | \leq \| b\|_{\ell^p(\mathbb{Z}^n)} \|\{ |i|^{-(n-\alpha)} \}\|_{\ell^{p'}(\mathbb{Z}^n \setminus \{ {\bf 0}\})}
< \infty, \,\,\,\, \forall \,\, j \in \mathbb{Z}^n.
\]
Then, (\ref{pointwise estim}) follows.

From the inequalities (\ref{estimate norm}) and (\ref{multinomial ineq}), with $n - \alpha$ instead of $n + \epsilon$, and Remark \ref{bound series}, we have for $j=(j_1, ..., j_n) \in \mathbb{Z}^n$
\[
|(I_{\alpha}b)(j)| \leq \sum_{i_n \in \mathbb{Z}} \cdot \cdot \cdot \sum_{i_1 \in \mathbb{Z}} \frac{|b(i_1, ..., i_n)|}{\max \{ 1, |i_1-j_1|^{1 - \alpha/n}\} \cdot \cdot \cdot \max \{ 1, |i_n-j_n|^{1 - \alpha/n}\}}.
\]
Now, Remark \ref{bound series} leads to
\begin{equation} \label{estim Iriesz}
\left(\sum_{j \in \mathbb{Z}^n}|(I_{\alpha}b)(j)|^q \right)^{1/q} \leq
\end{equation}
\[
\left[ \sum_{j_n \in \mathbb{Z}} \cdot \cdot \cdot \sum_{j_1 \in \mathbb{Z}} \left(\sum_{i_n \in \mathbb{Z}} \cdot \cdot \cdot \sum_{i_1 \in \mathbb{Z}} \frac{|b(i_1, ..., i_n)|}{\max \{ 1, |i_1-j_1|^{1 - \alpha/n}\} \cdot \cdot \cdot \max \{ 1, |i_n-j_n|^{1 - \alpha/n}\}} \right)^q \right]^{1/q}.
\]
Finally, (\ref{lplq estim for Riesz}) follows from Proposition \ref{op J} with $\gamma = \frac{\alpha}{n}$, the Minkowski's inequality for integrals on the $\sigma$-finite product measure space $\mathbb{Z} \times \mathbb{Z}$ with the counting measure, and an iterative argument applied on the right-hand side of the inequality that appears in (\ref{estim Iriesz}).
\end{proof}

\begin{remark}
Another proof of (\ref{lplq estim for Riesz}) was given in \cite[Proposition (a)]{Wainger}.
\end{remark}

\begin{theorem} \label{main result}
Let $0 < \alpha < n$ and let $I_{\alpha}$ be the discrete Riesz potential given by (\ref{Riesz potential}). Then, for
$0 < p \leq 1$ and $\frac{1}{q} = \frac{1}{p} - \frac{\alpha}{n}$
\[
\| I_{\alpha} \, b \|_{\ell^{q}(\mathbb{Z}^n)} \leq C \| b \|_{H^{p}(\mathbb{Z}^n)},
\]
where $C$ does not depend on $b$.
\end{theorem}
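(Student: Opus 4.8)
The plan is to use the atomic decomposition of Theorem~\ref{atomic Hp} together with a reduction to the $\ell^p$--$\ell^q$ boundedness of $I_\alpha$ proved in Theorem~\ref{Riesz bound} and the fractional maximal estimate in Proposition~\ref{fract max 2}. First I would write $b = \sum_k \lambda_k a_k$ with $(p,\infty,d_p)$-atoms $a_k$ and $\sum_k |\lambda_k|^p \leq C\|b\|_{H^p}^p$. Since $q \geq 1$, one has $\|I_\alpha b\|_{\ell^q} \leq \left( \sum_k |\lambda_k|^q \|I_\alpha a_k\|_{\ell^q}^q \right)^{1/q}$ after checking that $I_\alpha$ applied to the series converges appropriately (here one uses that each $a_k \in \ell^1 \subset \ell^p$ for any $p$, so Theorem~\ref{Riesz bound} applies with, say, $p=2$ if $\alpha<n/2$, and in general with some $p_0 \in (1, n/\alpha)$); then, using $\ell^p \hookrightarrow \ell^q$ (as $p \leq 1 \leq q$) it suffices to prove the \emph{uniform atomic estimate}
\[
\| I_\alpha a \|_{\ell^q(\mathbb{Z}^n)} \leq C
\]
for every $(p,\infty,d_p)$-atom $a$, with $C$ independent of the atom. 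Indeed, granting this, $\|I_\alpha b\|_{\ell^q}^q \leq C^q \sum_k |\lambda_k|^q \leq C^q \left( \sum_k |\lambda_k|^p \right)^{q/p} \leq C^q \|b\|_{H^p}^{pq/p \cdot (1/q)\cdot q}$, i.e. $\|I_\alpha b\|_{\ell^q} \leq C\|b\|_{H^p}$.

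To prove the uniform atomic estimate, let $a$ be an atom centered at a discrete cube $Q$ with $\#Q = (2m+1)^n$, center $j_0$, and ``radius'' $m$. I would split $\mathbb{Z}^n = Q^* \cup (Q^*)^c$, where $Q^*$ is a fixed dilate of $Q$ (say side length a constant multiple of $m$, or the cube concentric with $Q$ of radius $2nm$ or so), and estimate $\|I_\alpha a\|_{\ell^q}$ on each piece separately.

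\emph{Local part.} On $Q^*$, I would bound $|I_\alpha a(j)| \leq \sum_{i \in Q} \frac{|a(i)|}{|i-j|^{n-\alpha}} \leq (\#Q)^{-1/p} \sum_{i \in Q}\frac{1}{|i-j|^{n-\alpha}}$ using (a2). In fact this is exactly $\leq (\#Q)^{-1/p} (\#Q)^{\alpha/n} M_\alpha(\chi_Q)(j)$ up to constants, or more directly one compares with the fractional maximal operator: writing $a = (\#Q)^{-1/p}(\#Q)^{-\alpha/n}\,\tilde a$ where $\|\tilde a\|_\infty \leq (\#Q)^{\alpha/n}$... cleaner is to note $|I_\alpha a(j)| \lesssim M_\alpha a(j)$ pointwise is \emph{not} quite true, so instead I would use that $|a(i)| \leq (\#Q)^{-1/p}\chi_Q(i)$ and apply the $\ell^{p_0}$--$\ell^q$ bound for $I_\alpha$ from Theorem~\ref{Riesz bound} with a suitable $p_0 \in (1, n/\alpha)$ such that $\tfrac1q = \tfrac1{p_0} - \tfrac\alpha n$: this gives $\|I_\alpha a\|_{\ell^q(\mathbb{Z}^n)} \leq C \|a\|_{\ell^{p_0}} \leq C (\#Q)^{-1/p}(\#Q)^{1/p_0} = C (\#Q)^{1/p_0 - 1/p}$. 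Since $\tfrac1{p_0} - \tfrac1p = (\tfrac1{p_0}-\tfrac1q) - (\tfrac1p - \tfrac1q) = \tfrac\alpha n - \tfrac\alpha n = 0$ precisely because $q$ is the same exponent in both scaling relations, this is $\leq C$ uniformly. This already handles the \emph{entire} space, not just $Q^*$! So in fact, when $0<\alpha<n$ and $p>n/(n+\alpha)$ (so that $d_p = 0$ and $p_0 := (\tfrac1q+\tfrac\alpha n)^{-1}$ lands in $(1,\infty)$... wait, need $p_0 < n/\alpha$, equivalently $1/q > 0$, which holds), one is done directly. The subtlety is only for small $p$, where $q \leq 1$ is impossible — actually $q = (\tfrac1p - \tfrac\alpha n)^{-1}$, and for the argument ``$\ell^p \hookrightarrow \ell^q$'' we need $q \geq p$, i.e. $\tfrac\alpha n \geq 0$, always true; and for summing atoms in $\ell^q$ we used $q \geq 1$, i.e. $\tfrac1p \leq 1 + \tfrac\alpha n$, i.e. $p \geq n/(n+\alpha)$. \emph{This is the main obstacle: the range $p < n/(n+\alpha)$, where $q < 1$.}

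\emph{Handling small $p$ (the hard part).} For $p < n/(n+\alpha)$ we have $q<1$ and the triangle-inequality-in-$\ell^q$ step fails; instead one uses the $p$-subadditivity of $\|\cdot\|_{\ell^q}^q$... no — rather, one uses $\|\sum_k \lambda_k I_\alpha a_k\|_{\ell^q}^q \leq \sum_k |\lambda_k|^q \|I_\alpha a_k\|_{\ell^q}^q$ which holds since $q \leq 1$ makes $x \mapsto x^q$ subadditive; then $\sum_k |\lambda_k|^q \leq (\sum_k|\lambda_k|^p)^{q/p}$ still works since $q/p \geq 1$... wait $q > p$ always and we raise to power $q/p \geq 1$: $\sum |\lambda_k|^q = \sum (|\lambda_k|^p)^{q/p} \leq (\sum |\lambda_k|^p)^{q/p}$ — yes, valid. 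So actually the summation works for \emph{all} $0<p\leq1$! The genuine obstacle is then only the \emph{uniform atomic bound} $\|I_\alpha a\|_{\ell^q} \leq C$ when $q < 1$, where the moment conditions (a3) with $d_p \geq 1$ must be used: one cannot invoke Theorem~\ref{Riesz bound} (which requires $p_0 > 1$, impossible since $p_0 = (\tfrac1q + \tfrac\alpha n)^{-1}$ and $\tfrac1q > 1$). So for small $p$ I would: (i) bound the local part over $Q^*$ by Hölder in $j$ over the finite set $Q^*$ combined with the crude pointwise bound $|I_\alpha a(j)| \lesssim (\#Q)^{-1/p}\sum_{i\in Q}|i-j|^{-(n-\alpha)}$, summing $|i-j|^{-(n-\alpha)q}$ over $j \in Q^*$ (convergent-type sum giving a power of $m$) and checking the powers of $m = (\#Q)^{1/n}$ cancel; (ii) for the global part $j \notin Q^*$, use the vanishing moments: subtract the degree-$d_p$ Taylor polynomial of $y \mapsto |y - j|^{-(n-\alpha)}$ around the center $j_0$ of $Q$, so that $|I_\alpha a(j)| \leq \sum_{i\in Q}|a(i)|\,\big| |i-j|^{-(n-\alpha)} - P_{d_p}(i) \big| \lesssim (\#Q)^{-1/p}\,\#Q\, \frac{m^{d_p+1}}{|j-j_0|^{n-\alpha+d_p+1}}$ by the mean value form of the remainder (valid since $|i-j_0| \leq C m \leq \tfrac12|j-j_0|$ for $j \notin Q^*$); then raise to the $q$th power and sum over $j$ with $|j-j_0| > cm$, obtaining $\lesssim (\#Q)^{q(1 - 1/p)} m^{q(d_p+1)} \sum_{|j-j_0|>cm} |j-j_0|^{-q(n-\alpha+d_p+1)}$, which converges provided $q(n-\alpha+d_p+1) > n$ — and one checks this holds because $d_p = \lfloor n(p^{-1}-1)\rfloor > n(p^{-1}-1) - 1 = n/p - n - 1$, giving $n - \alpha + d_p + 1 > n/p - \alpha = n/q$, hence $q(n-\alpha+d_p+1) > n$ as needed; finally the sum is $\lesssim m^{n - q(n-\alpha+d_p+1)}$, and collecting powers of $m$ (using $\#Q \sim m^n$) everything is a nonpositive power of $m$ times a constant, so $\leq C$ uniformly. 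Thus the plan is: reduce via atoms and $p$-subadditivity to the uniform bound $\|I_\alpha a\|_{\ell^q}\leq C$; prove that by a large-cube $Q^*$ splitting, using Theorem~\ref{Riesz bound} (or a direct Hölder sum) locally and the $d_p$-order Taylor/moment cancellation on the complement; the delicate point throughout is the bookkeeping of the exponents of $\#Q$, which is exactly where the relation $\tfrac1q = \tfrac1p - \tfrac\alpha n$ and the definition $d_p = \lfloor n(p^{-1}-1)\rfloor$ pay off.
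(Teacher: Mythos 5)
Your overall strategy coincides with the paper's: atomic decomposition, reduction to a uniform bound $\|I_\alpha a\|_{\ell^q}\le C$ over $(p,\infty,d_p)$-atoms, a local/global splitting around a fixed dilate $Q^*$ of the supporting cube, and Taylor expansion of $x\mapsto|x-j|^{\alpha-n}$ played against the moment condition (a3) for the global part. Your global estimate is essentially the paper's and is correct: the remainder bound, the convergence criterion $q(n-\alpha+d_p+1)>n$ (which follows from $\lfloor s\rfloor+1>s$ with $s=n(p^{-1}-1)$), and the exact cancellation of the powers of $m$ all check out. Where the paper routes the tail sum through the fractional maximal operator $M_{\alpha n/(n+N)}$ and Proposition \ref{fract max 2}, you sum the tail directly; that is a perfectly good, more elementary alternative.

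The genuine gap is in your local part. You propose to apply Theorem \ref{Riesz bound} with $p_0\in(1,n/\alpha)$ satisfying $\frac1q=\frac1{p_0}-\frac\alpha n$; but since also $\frac1q=\frac1p-\frac\alpha n$, this forces $p_0=p\le1$, outside the hypothesis of Theorem \ref{Riesz bound} --- and the bound $\|I_\alpha a\|_{\ell^q}\le C\|a\|_{\ell^p}$ for $p\le 1$ is genuinely false without cancellation (take $a=\delta_0$, $p=1$, $q=\frac{n}{n-\alpha}$: then $\sum_{j\ne0}|j|^{(\alpha-n)q}=\sum_{j\ne0}|j|^{-n}=\infty$). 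You flag this obstruction only when $q<1$, but it is present for \emph{every} $p\in(0,1]$, so your claim that the case $p>n/(n+\alpha)$ ``is done directly'' (and a fortiori the whole-space shortcut) collapses. Two repairs: (1) the paper's --- take any $p_0>1$ with its \emph{own} Riesz exponent $q_0>q$, bound $\|I_\alpha a\|_{\ell^{q_0}}\le C\|a\|_{\ell^{p_0}}$ by Theorem \ref{Riesz bound}, and pass from $\ell^{q_0}$ to $\ell^q$ on the finite set $Q^*$ by H\"older; the resulting factor $(\#Q^*)^{1-q/q_0}$ is exactly absorbed because $\frac1p-\frac1q=\frac1{p_0}-\frac1{q_0}=\frac\alpha n$; or (2) your own sketch (i), which in fact works for all $p\in(0,1]$: on $Q^*$ one has $|I_\alpha a(j)|\le(\#Q)^{-1/p}\sum_{i\in Q}|i-j|^{\alpha-n}\le C\,m^{\alpha-n/p}$, and multiplying by $\#Q^*\sim m^n$ gives exponent $n+q\alpha-qn/p=0$ --- but you must keep the inner sum over $i$ intact rather than distribute the $q$-th power inside it, since the latter produces a surplus $m^{n(1-q)}$. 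A further cosmetic slip: for $q\ge1$ the inequality $\|\sum_k\lambda_kI_\alpha a_k\|_{\ell^q}\le(\sum_k|\lambda_k|^q\|I_\alpha a_k\|_{\ell^q}^q)^{1/q}$ is false (two equal terms give $2$ versus $2^{1/q}$); use the plain triangle inequality $\sum_k|\lambda_k|\,\|I_\alpha a_k\|_{\ell^q}\le C\sum_k|\lambda_k|\le C(\sum_k|\lambda_k|^p)^{1/p}$, which still yields the conclusion.
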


\begin{proof}
We take $p_0$ such that $1 < p_0 < \frac{n}{\alpha}$. By Theorem \ref{atomic Hp}, given $b \in H^{p}(\mathbb{Z}^n)$ we can write 
$b = \sum_k \lambda_k a_k$ where the $a_k$'s are $(p, \infty, d_p)$ atoms, the scalars $\lambda_k$ satisfies $\sum_{k} |\lambda_k |^{p} \leq C \| b \|_{H^{p}(\mathbb{Z}^n)}^{p}$  and the series converges in $H^{p}(\mathbb{Z}^n)$ and so in $\ell^{p_0}(\mathbb{Z}^n)$ since
$H^{p}(\mathbb{Z}^n) \subset \ell^{p}(\mathbb{Z}^n) \subset \ell^{p_0}(\mathbb{Z}^n)$ embed continuously. 
For $\frac{1}{q_0} = \frac{1}{p_0} - \frac{\alpha}{n}$, by Theorem \ref{Riesz bound}, $I_{\alpha}$ is a bounded operator 
$\ell^{p_0}(\mathbb{Z}^n) \to \ell^{q_0}(\mathbb{Z}^n)$. Since $b = \sum_k \lambda_k a_k$ in $\ell^{p_0}(\mathbb{Z}^n)$, we have that 
$(I_{\alpha} \, b)(j) = \sum_{k} \lambda_k (I_{\alpha} \, a_k)(j)$ for all $j \in \mathbb{Z}^n$, and thus
\begin{equation}
|(I_{\alpha} \, b)(j)| \leq \sum_{k} |\lambda_k| |(I_{\alpha} \, a_k)(j)|, \,\,\,\, \forall \, j \in \mathbb{Z}^n. \label{puntual}
\end{equation}
Then for $1 \leq q$, by (\ref{puntual}) and Minkowski's integral inequality on $\sigma$-finite measure spaces, we have
\begin{equation} \label{mink ineq}
\|I_{\alpha} \, b \|_{\ell^{q}(\mathbb{Z}^n)} \leq \sum_{k} |\lambda_k| \|I_{\alpha} \, a_k \|_{\ell^{q}(\mathbb{Z}^n)};
\end{equation}
now for $0 < q < 1$, from (\ref{puntual}), it is easy to check that
\begin{equation} \label{q ineq}
\|I_{\alpha} \, b \|_{\ell^{q}(\mathbb{Z}^n)}^q \leq \sum_{k} |\lambda_k|^q \|I_{\alpha} \, a_k \|_{\ell^{q}(\mathbb{Z}^n)}^q.
\end{equation}
Thus, if for $0 < p \leq 1$ and $\frac{1}{q}= \frac{1}{p} - \frac{\alpha}{n}$ we see that $\|I_{\alpha} \, a_k \|_{\ell^{q}(\mathbb{Z}^n)} \leq C$, with $C$ independent of the $(p, \infty, d_p)$-atom $a_k$, then the estimate (\ref{mink ineq}) or (\ref{q ineq}) according to the case and the fact that 
$\sum_{k} |\lambda_k |^{p} \leq C \| b \|_{H^{p}(\mathbb{Z}^n)}^{p}$ lead to 
\[
\|I_{\alpha} \, b \|_{\ell^{q}(\mathbb{Z}^n)} \leq C \left( \sum_{k} |\lambda_k|^{\min\{1, q \}} \right)^{\frac{1}{\min\{1, q \}}} \leq C \left( \sum_{k} |\lambda_k |^{p} \right)^{1/p} \leq C\| b \|_{H^{p}(\mathbb{Z}^n)}.
\] 
Being $b$ an arbitrary element of $H^{p}(\mathbb{Z}^n)$, the theorem follows.

To conclude the proof we will prove that for $0 < p \leq 1$ and $\frac{1}{q}= \frac{1}{p} - \frac{\alpha}{n}$ there exists an universal constant $C > 0$, which depends on $\alpha$, $n$, $p$ and $q$ only, such that 
\begin{equation} \label{uniform estimate}
\|I_{\alpha} \, a \|_{\ell^{q}(\mathbb{Z}^n)} \leq C, \,\,\,\, \textit{for all} \,\, (p, \infty, d_p) - \textit{atom} \,\, a=\{ a(i) \}. 
\end{equation}
To prove (\ref{uniform estimate}), let $a(\cdot)$ be an atom centered at the cube 
$Q_{k^0}= \prod_{1 \leq l \leq n}[ k^0_l - m, k^0_l + m ]$. We put
$4\lfloor \sqrt{n} \rfloor Q_{k^0}=\prod_{1 \leq l \leq n} \left[ k^0_l - 4\lfloor \sqrt{n} \rfloor m, k^0_l + 4\lfloor \sqrt{n} \rfloor m \right]$. So
\begin{equation} \label{sum2}
\sum_{j \in \mathbb{Z}^n} |(I_{\alpha} \, a)(j)|^{q} = \sum_{j \in 4\lfloor \sqrt{n} \rfloor Q_{k^0}} |(I_{\alpha} \, a)(j)|^{q} + 
\sum_{j \in \mathbb{Z}^n \setminus 4\lfloor \sqrt{n} \rfloor Q_{k^0}} |(I_{\alpha} \, a)(j)|^{q}.
\end{equation} 
To estimate the first sum, we apply H\"older inequality with respect
the exponent $\frac{q_0}{q}$, then from Theorem \ref{Riesz bound}, the size condition (a2) on the atom $a(\cdot)$, and since 
$\frac{1}{p} - \frac{1}{q} = \frac{1}{p_0} - \frac{1}{q_0} = \frac{\alpha}{n}$ we have
\begin{equation} \label{estim C}
\sum_{j \in 4\lfloor \sqrt{n} \rfloor Q_{k^0}}|(I_{\alpha} \, a)(j)|^{q} \leq  
\left(\frac{8 \lfloor \sqrt{n} \rfloor + 1}{2} \right)^{n(q_0 - q)/q_0} \cdot
\left( \sum_{j \in \mathbb{Z}^n} |(I_{\alpha} \, a)(j)|^{q_0}\right)^{q/q_0} \cdot (\# Q_{k^0})^{(q_0-q)/q_0}
\end{equation}
\[
\leq C \left( \sum_{j \in Q_{k^0}}|a (j)|^{p_0}\right)^{q/p_0} \cdot (\# Q_{k^0})^{(q_0-q)/q_0}
\]
\[
\leq C \, (\# Q_{k^0})^{-q/p} \cdot (\# Q_{k^0})^{q/p_0} \cdot (\# Q_{k^0})^{(q_0-q)/q_0} = C,
\]
\\
with $C$ independent of $k^0$ and $m$.

To estimate the second sum in (\ref{sum2}), we put $N - 1 = \lfloor n(p^{-1} - 1) \rfloor$. In view of the moment condition (a3) of 
$a(\cdot)$ we have, for $j \in \mathbb{Z}^n \setminus 4 \lfloor \sqrt{n} \rfloor Q_{k^0}$, that
\[
(I_{\alpha} \, a)(j) = \sum_{i \in Q_{k^0}} |i-j|^{\alpha-n} \, a(i) = \sum_{i \in Q_{k^0}} [|i-j|^{\alpha-n} - q_{N}(i,j)] \, a(i),
\]
where $q_{N}(\, \cdot \,, j)$ is the degree $N - 1$ Taylor polynomial of the function $x \rightarrow |x-j|^{\alpha-n}$ expanded around $k^0$. 
By the standard estimate of the remainder term in the Taylor expansion there exists $\xi$ between $i$ and $k^0$ such that
\[
| |i-j|^{\alpha-n} - q_{N}(i, j) | \leq C |i - k^0 |^{N} | j - \xi|^{\alpha-n-N},
\]
for any $i \in Q_{k^0}$ and any $j \notin 4 \lfloor \sqrt{n} \rfloor Q_{k^0}$. Since $|j - \xi| \geq \displaystyle{\frac{|j - k^0|}{2}}$, we get
\[
| |i-j|^{\alpha-n} - q_{N}(i, j) | \leq C (2m+1)^{N} | j - k^0|^{\alpha-n-N}.
\]
This inequality and the condition (a2) of the atom $a(\cdot)$ allow us to conclude that
\[
|(I_{\alpha}a)(j)| \leq C \frac{(2m+1)^{n+N}}{(\# Q_{k^0})^{1/p}} | j - k^0|^{\alpha-n-N} \leq \frac{C}{(\# Q_{k^0})^{1/p}} 
\left[ M_{\frac{\alpha n}{n+N}} (\chi_{Q_{k^0}})(j) \right]^{\frac{n+N}{n}},
\]
for all $j \notin 4 \lfloor \sqrt{n} \rfloor Q_{k^0}$. Thus,
\begin{equation} \label{cota afuera}
\sum_{j \in \mathbb{Z}^n \setminus 4\lfloor \sqrt{n} \rfloor Q_{k^0}} |(I_{\alpha}a)(j)|^q \leq \frac{C}{(\# Q_{k^0})^{q/p}}
\sum_{j \in \mathbb{Z}^n} \left[ M_{\frac{\alpha n}{n+N}} (\chi_{Q_{k^0}})(j) \right]^{q \frac{n+N}{n}}.
\end{equation}
Since  $N-1= \lfloor n(\frac{1}{p}-1) \rfloor$, we have $q \frac{n+N}{n} > 1$. We write $\widetilde{q} = q \frac{n+N}{n}$ and let 
$\frac{1}{\widetilde{p}} = \frac{1}{\widetilde{q}} + \frac{\alpha}{n+N}$, so $\frac{\widetilde{p}}{\widetilde{q}} = \frac{p}{q}$. From
Proposition \ref{fract max 2}, we obtain
\begin{equation} \label{cota afuera 2}
\sum_{j \in \mathbb{Z}^n} \left[ M_{\frac{\alpha n}{n+N}} (\chi_{Q_{k^0}})(j) \right]^{q \frac{n+N}{n}} \leq
C \left( \sum_{j \in \mathbb{Z}^n} \chi_{Q_{k^0}}(j) \right)^{q/p} = C (\# Q_{k^0})^{q/p}.
\end{equation}
Now, (\ref{cota afuera}) and (\ref{cota afuera 2}) give
\begin{equation} \label{cota afuera 3}
\sum_{j \in \mathbb{Z}^n \setminus 4\lfloor \sqrt{n} \rfloor Q_{k^0}} |(I_{\alpha}a)(j)|^q \leq C.
\end{equation}
Finally, (\ref{estim C}) and (\ref{cota afuera 3}) lead to (\ref{uniform estimate}). Thus the proof is concluded.
\end{proof}

\begin{remark}
In \cite[Theorem 4]{Kanjin} for $n=1$, $0 < p \leq 1$, $1/q = 1/p - \alpha$ and $0 < \alpha < 1$, the $H^p(\mathbb{Z}) \to H^q(\mathbb{Z})$ boundedness of $I_{\alpha}$ was obtained. Recently, in \cite{Rocha2} we generalize this result on $\mathbb{Z}^n$. More precisely, we prove
the $H^p(\mathbb{Z}^n) \to H^q(\mathbb{Z}^n)$ boundedness of $I_{\alpha}$ for $n \geq 1$, $\frac{n-1}{n} < p \leq 1$, $1/q = 1/p - \alpha/n$ and $0 < \alpha < n$. To prove this result, as in \cite{Kanjin}, we furnish a molecular decomposition for the elements of 
$H^{p}(\mathbb{Z}^n)$ on the range $\frac{n-1}{n} < p \leq 1$. This decomposition joint with some results and ideas of the present work allow us to obtain such estimate.
\end{remark}

{\bf Acknowledgements.} I express my thanks to the referees for their useful suggestions and comments.

Pablo Rocha, Instituto de Matem\'atica (INMABB), Departamento de Matem\'atica, Universidad Nacional del Sur (UNS)-CONICET, Bah\'ia Blanca, Argentina. \\
{\it e-mail:} pablo.rocha@uns.edu.ar

\end{document}